\newtheorem{theorem}{Theorem}[section]
\newtheorem*{theorem*}{Theorem}
\newtheorem{lemma}{Lemma}[section]
\newtheorem{conjecture}[theorem]{Conjecture}
\def\p{\partial}
\def\R{\mathbb{R}}
\def\vp{\varphi}
\numberwithin{equation}{section}
\begin{document}

\title[Neumann eigenvalues in ROSS]{Isoperimetric inequalities for Neumann eigenvalues on bounded domains in rank-1 symmetric spaces }

\author{Yifeng Meng} \thanks{}
\address{School of Mathematical Sciences,
Fudan University, Shanghai, 200433, China}
\email{yfmeng23@m.fudan.edu.cn}

\author{Kui Wang} \thanks{The research of the second author is supported by Natural Science Foundation of Jiangsu Province Grant No. BK20231309}
\address{School of Mathematical Sciences, Soochow University, Suzhou, 215006, China}
\email{kuiwang@suda.edu.cn}

\subjclass[2010]{35P15, 58G25}

\keywords{Neumann eigenvalues, Isoperimetric inequality, Rank-1 symmetric space}

\begin{abstract}
    In this paper, we prove sharp  isoperimetric inequalities for lower order eigenvalues of Neumann Laplacian on  bounded domains in both compact and noncompact rank-1 symmetric spaces.  Our results generalize the work of Wang and Xia  for bounded domains in the hyperbolic space \cite{WX23}, and  Szeg\"o-Weinberger inequalities in rank-1 symmetric spaces obtained by Aithal and Santhanam  \cite{AS96}. 
\mbox{}
\end{abstract}
\maketitle

\section{Introduction}
    Let $\Omega$ be a bounded domain with smooth boundary, and  we consider the following  Neumann eigenvalue problem
    \begin{align}\label{1.1}
        \begin{cases}
            -\Delta u(x)=\mu(\Omega) u(x), &\quad x\in \Omega,\\
            \p_\nu u(x)=0, &\quad x\in \p \Omega,
        \end{cases}
    \end{align}
    where  $\nu$ is the outward unit normal to $\p \Omega$. It is well known that  the spectrum of  problem \eqref{1.1} is discrete and denoted by 
    \begin{align*}
       0= \mu_0(\Omega) < \mu_1(\Omega) \leq \mu_2(\Omega) \leq \cdots \to +\infty
    \end{align*}
with its multiplicity. For a bounded domain in $\R^n$, the classic Szeg\"o-Weinberger inequality asserts that the ball is  the unique maximizer of the first nonzero Neumann eigenvalue among domains having  the same volume in $\R^n$, namely,
 \begin{align}\label{1.2}
        \mu_1(\Omega)  \leq \mu_1(B),
\end{align}
where $B$ is a round ball in $\R^n$ with volume equal to $|\Omega|$ (the volume of $\Omega$). This result was proved by Szeg\"o \cite{Sz54} in planer domains, and by Weinberger \cite{Wei56} for all dimensions. Moreover inequality \eqref{1.2} also holds for bounded domains in hyperbolic space, a hemisphere \cite{AB95, Xu95} and $94\%$ of the $2$-sphere \cite{LL23}. Recently, the second named author \cite{Wang19} proved  a  Szeg\"o-Weinberger type inequality for bounded domains in curved spaces.

For bounded simply connected planer domains, applying Szeg\"o's conformal map technique one can prove
\begin{align}\label{1.3}
        \frac{1}{\mu_1(\Omega)}+\frac{1}{\mu_2(\Omega)}\geq \frac{2}{\mu_1(B)},
\end{align}
where $B$ is a round disk with $|B|=|\Omega|$.
Moreover estimate \eqref{1.3} is sharp and the equality holds on round balls. In 1993,  Ashbaugh and Benguria \cite{AB93} improved \eqref{1.3} by removing the assumption of simply connectedness, and conjectured that
\begin{conjecture}\label{con1}
For any bounded domain $\Omega$ in $\R^n$ ($n\ge 3$), it holds  
   \begin{align*}
            \frac{1}{\mu_1(\Omega) }+\frac{1}{\mu_2(\Omega) }+\cdots + \frac{1}{\mu_{n}(\Omega) }\geq \frac{n}{\mu_1(B)},
        \end{align*}
        where $B$ is a round ball in $\R^n$ with  $|B|=|\Omega|$. 
Moreover the equality occurs if and only if $\Omega$ is a round ball.  
\end{conjecture}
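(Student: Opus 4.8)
Conjecture~\ref{con1} is open for $n\ge 3$ (the case $n=2$ is Szeg\"o's theorem, sharpened by Ashbaugh--Benguria), so what follows is the natural line of attack and the point at which it stalls. First I would run Weinberger's test-function method. Let $B\subset\R^n$ be the round ball of radius $R$ with $|B|=|\Omega|$, let $g\colon[0,R]\to\R$ be the radial profile of the first nonzero Neumann eigenfunction of $B$ (so $g(0)=0$, $g'(R)=0$, and $g$ satisfies the associated singular ODE), and extend $g$ by $g\equiv g(R)$ on $[R,\infty)$. For $a\in\R^n$ and $\rho=|x-a|$ set $v_i(x)=g(\rho)\,(x_i-a_i)/\rho$, $i=1,\dots,n$. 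A Brouwer-degree (center of mass) argument, exactly as in the Szeg\"o--Weinberger proof, produces a point $a$ with $\int_\Omega v_i\,dV=0$ for every $i$, so that the $v_i$ are admissible in the variational characterizations of $\mu_1(\Omega),\dots,\mu_n(\Omega)$.

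Next, form the stiffness and mass matrices $A_{ij}=\int_\Omega\langle\nabla v_i,\nabla v_j\rangle\,dV$ and $C_{ij}=\int_\Omega v_iv_j\,dV$, and let $0<\lambda_1\le\cdots\le\lambda_n$ be the eigenvalues of the pencil $Aw=\lambda Cw$. Since each $v_i$ is orthogonal to the constants, the min--max principle gives $\mu_k(\Omega)\le\lambda_k$, hence
\begin{align*}
\sum_{k=1}^n\frac1{\mu_k(\Omega)}\ \ge\ \sum_{k=1}^n\frac1{\lambda_k}\ =\ \operatorname{tr}(A^{-1}C),
\end{align*}
with equality when $\Omega=B$, where the $v_i$ are genuine eigenfunctions and $\operatorname{tr}(A^{-1}C)=n/\mu_1(B)$. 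Everything thus reduces to the matrix inequality $\operatorname{tr}(A^{-1}C)\ge n/\mu_1(B)$, for which two ingredients are available. First, after a rotation of $\R^n$ one may take $C=\operatorname{diag}(c_1,\dots,c_n)$, and since $(A^{-1})_{ii}\ge A_{ii}^{-1}$ for positive definite $A$, one gets $\operatorname{tr}(A^{-1}C)\ge\sum_i c_i/A_{ii}=\sum_i\big(\int_\Omega v_i^2\big)\big/\big(\int_\Omega|\nabla v_i|^2\big)$. Second, because $\sum_i v_i^2=g(\rho)^2$ and $\sum_i|\nabla v_i|^2=g'(\rho)^2+(n-1)g(\rho)^2/\rho^2$ are radial and carry the monotonicity forced by the ODE for $g$, a rearrangement comparison with $B$ yields the Szeg\"o--Weinberger bound $\sum_i\int_\Omega|\nabla v_i|^2\le\mu_1(B)\sum_i\int_\Omega v_i^2$.

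The hard part is that these two ingredients do not combine when $n\ge 3$: the Szeg\"o--Weinberger estimate controls only the \emph{sums} $\sum_i\int|\nabla v_i|^2$ and $\sum_i\int v_i^2$, whereas the quantity to be bounded below is a sum of \emph{ratios} $\sum_i(\int v_i^2)/(\int|\nabla v_i|^2)$, and replacing the aggregate ratio by the sum of ratios costs a Cauchy--Schwarz factor that runs the wrong way. The gap does close when $C$ is a scalar matrix $\gamma I$: there $\operatorname{tr}(A^{-1}C)=\gamma\,\operatorname{tr}(A^{-1})\ge\gamma n^2/\operatorname{tr}(A)=\gamma n^2/\sum_i\int_\Omega|\nabla v_i|^2\ge\gamma n^2/(n\gamma\,\mu_1(B))=n/\mu_1(B)$. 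In dimension $2$ one can \emph{force} $C=\gamma I$ by replacing the directions $x_i/|x|$ with the components of a conformal diffeomorphism $\Omega\to\mathbb{D}$, Hersch-normalized by a M\"obius transformation so that the mass matrix becomes scalar, and using conformal invariance of the planar Dirichlet integral; this is precisely the Ashbaugh--Benguria argument, and Szeg\"o's original proof in the simply connected case. No analogue of this normalization is known for $n\ge 3$, which is why Conjecture~\ref{con1} remains open. What the method does deliver unconditionally is the Szeg\"o--Weinberger inequality $\mu_1(\Omega)\le\mu_1(B)$ and, with the geometry of rank-$1$ symmetric spaces in place of $\R^n$ (where $g$ still solves an explicit radial ODE and the centering argument still runs), the corresponding lower-order statements that are the subject of this paper.
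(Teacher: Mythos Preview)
You are right that Conjecture~\ref{con1} is open, and the paper does not prove it; it is stated only as motivation. So there is no ``paper's own proof'' to compare against, and your submission is appropriately a discussion of the obstruction rather than a proof. Your analysis of the Weinberger scheme, the pencil formulation $\mu_k(\Omega)\le\lambda_k$, and the sum-of-ratios versus ratio-of-sums mismatch is accurate and pinpoints the genuine difficulty.

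It is worth noting, though, how the paper (following Wang--Xia) sidesteps this obstruction to obtain the weaker $(n-1)$-term inequality of Theorem~\ref{thm}, since your write-up stops just short of this. Rather than working with the full pencil, the paper applies a QR-factorization so that each $v_i$ is $L^2$-orthogonal to $u_0,\dots,u_{i-1}$, giving $\mu_i(\Omega)\int_\Omega v_i^2\le\int_\Omega|\nabla v_i|^2$ directly for every $i$. The angular part $|\nabla^{S_r}\omega_i|^2$ then satisfies the \emph{pointwise} bound $|\nabla^{S_r}\omega_i|^2\le -H'(r)/(n-1)$ (in $\R^n$ this is just $1/r^2\le (n-1)/(r^2(n-1))$), which lets one trade the contribution of indices $i\ge n$ against the first $n-1$ via the monotonicity $\mu_n(\Omega)\ge\mu_i(\Omega)$; see the chain \eqref{4.8}. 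This is exactly the mechanism that buys $n-1$ terms but not $n$: the pointwise bound on a single $|\nabla^{S_r}\omega_i|^2$ saturates at $-H'/(n-1)$, not $-H'/n$. Your trace-of-inverse approach and the paper's QR approach are two faces of the same limitation.

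One minor quibble: your sentence ``replacing the aggregate ratio by the sum of ratios costs a Cauchy--Schwarz factor that runs the wrong way'' is a bit imprecise. The inequality $\sum c_i/A_{ii}\ge(\sum c_i)^2/\sum c_iA_{ii}$ (Cauchy--Schwarz) does go the right way, but it only yields $\operatorname{tr}(A^{-1}C)\ge(\operatorname{tr} C)^2/\sum c_iA_{ii}$, and one has no control on the weighted sum $\sum c_iA_{ii}$---only on the unweighted $\operatorname{tr} A=\sum A_{ii}$ via the Szeg\"o--Weinberger rearrangement. That is the precise place the argument stalls.
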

We refer to \cite{As99, He06} and references therein for some progress towards this conjecture. Recently, Wang and Xia  \cite{WX23} made an important progress, which supports Conjecture \ref{con1} well. Precisely, they proved  
\begin{theorem}[\cite{WX23}]\label{thm}
    For any bounded domain $\Omega$ in $\R^n$ ($n\ge 3$), it holds  
   \begin{align}\label{1.4}
            \frac{1}{\mu_1(\Omega) }+\frac{1}{\mu_2(\Omega) }+\cdots + \frac{1}{\mu_{n-1}(\Omega) }\geq \frac{n-1}{\mu_1(B)},
        \end{align}
        where $B$ is a round ball in $\R^n$ having the same volume as $\Omega$. 
Moreover the equality occurs if and only if $\Omega$ is a round ball.  
\end{theorem}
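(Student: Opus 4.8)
I would follow Weinberger's symmetrization method for the Szeg\"o--Weinberger inequality, in the refined form of \cite{WX22}, now applied to the sum of the first $n-1$ reciprocal eigenvalues; the same scheme, with the Euclidean radial profile replaced by the radial profile of the first Neumann eigenfunction of a geodesic ball, is what should yield the rank-one symmetric space version and recover \cite{AS96}. Fix the comparison ball $B=B_R(0)\subset\R^n$ with $|B|=|\Omega|$. Its first nonzero Neumann eigenvalue $\mu_1(B)$ has multiplicity $n$, with eigenfunctions $g(|x|)\,x_i/|x|$, $i=1,\dots,n$, where $g$ solves $g''+\tfrac{n-1}{r}g'+\big(\mu_1(B)-\tfrac{n-1}{r^2}\big)g=0$ on $(0,R)$ with $g(0)=0$, $g'(R)=0$, $g>0$ on $(0,R]$; here $g$ is nondecreasing, so the extension $G$ defined by $G=g$ on $[0,R]$ and $G\equiv g(R)$ on $[R,\infty)$ is bounded and nondecreasing, $G^2$ is nondecreasing, and $\Psi(r):=G'(r)^2+\tfrac{n-1}{r^2}G(r)^2$ is nonincreasing. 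For $a\in\R^n$ set $h^a_i(x)=G(|x-a|)\,(x_i-a_i)/|x-a|$, so that $\sum_i (h^a_i)^2=G(|x-a|)^2$ and $\sum_i|\nabla h^a_i|^2=\Psi(|x-a|)$, and on $B$ itself $\sum_i\int_B|\nabla h^0_i|^2=\mu_1(B)\sum_i\int_B(h^0_i)^2$.

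Next, the positioning. Let $u_0\equiv\mathrm{const}$ and let $u_1,\dots,u_{n-1}$ be $L^2(\Omega)$-orthonormal Neumann eigenfunctions for $\mu_1(\Omega)\le\cdots\le\mu_{n-1}(\Omega)$. By a center-of-mass / degree argument (as in Weinberger's proof) I would first pick $a$ with $\int_\Omega h^a_i=0$ for all $i$, and then an orthogonal change of frame $O$ so that $\varphi_i:=\sum_k O_{ik}h^a_k$ also satisfy the flag conditions $\int_\Omega u_\alpha\varphi_i=0$ for all $0\le\alpha<i\le n$ --- this is exactly a $QR$-type reduction of the $(n-1)\times n$ matrix $(\int_\Omega u_\alpha h^a_k)_{1\le\alpha\le n-1,\,1\le k\le n}$ by right multiplication with an orthogonal matrix, which always exists. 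Since a rotation preserves the mean-zero conditions and the pointwise identities $\sum_i\varphi_i^2=G(|x-a|)^2$, $\sum_i|\nabla\varphi_i|^2=\Psi(|x-a|)$, the monotonicity of $G^2$ and $\Psi$ together with $|\Omega|=|B|$ gives, by the usual rearrangement (the mass of $\Omega\setminus B$ sits at larger radius than the mass of $B\setminus\Omega$),
\[
\sum_{i=1}^n\int_\Omega|\nabla\varphi_i|^2\;=\;\int_\Omega\Psi(|x-a|)\,dx\;\le\;\mu_1(B)\int_\Omega G(|x-a|)^2\,dx\;=\;\mu_1(B)\sum_{i=1}^n\int_\Omega\varphi_i^2 .
\]
On the other hand, for each $i\le n-1$ the trial space $\mathrm{span}\{\varphi_i,\dots,\varphi_n\}$ is $L^2$-orthogonal to $u_0,\dots,u_{i-1}$ (by the flag), and $\int_\Omega\nabla u_\alpha\cdot\nabla\varphi_j=\mu_\alpha\int_\Omega u_\alpha\varphi_j=0$ for $\alpha<j$; a trace/Rayleigh estimate on this trial space then yields
\[
\frac{1}{\mu_i(\Omega)}\;\ge\;\frac{\sum_{j=i}^n\int_\Omega\varphi_j^2}{\sum_{j=i}^n\int_\Omega|\nabla\varphi_j|^2}\qquad(i=1,\dots,n-1).
\]

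Summing these $n-1$ inequalities and extracting $\sum_{i=1}^{n-1}\mu_i(\Omega)^{-1}\ge(n-1)/\mu_1(B)$ from the displayed trace inequality is the heart of the argument, and the step I expect to be the main obstacle. It cannot rest on that trace inequality alone: treating $\int_\Omega\varphi_i^2$ and $\int_\Omega|\nabla\varphi_i|^2$ as arbitrary positive numbers obeying only $\sum\int|\nabla\varphi_i|^2\le\mu_1(B)\sum\int\varphi_i^2$, one readily checks the target bound can fail for $n\ge4$. One must use the extra rigidity behind that inequality --- the pointwise identities $\sum\varphi_i^2=G^2$ and $\sum|\nabla\varphi_i|^2=\Psi$, the fact that $\Psi$ is bounded (so every $\int_\Omega|\nabla\varphi_i|^2$ is controlled by $|\Omega|$), and the monotone radial profiles that constrain the partial sums $\sum_{j=i}^n$ after the flag-rotation --- so that exactly one degree of freedom, the discarded index $i=n$, is spent, and no more. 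My best guess for the clean route is to use the rotational freedom still left inside the flag constraints to normalize the diagonal of the Gram matrix $(\int_\Omega\varphi_i\varphi_j)$, then apply a Chebyshev-type rearrangement inequality to the resulting monotone $n$-tuples, reducing the claim to the trace inequality plus an elementary estimate; getting this bookkeeping exactly right is where both the sharp constant $n-1$ and the extremality of the ball come from. Finally, equality would force equality in the rearrangement step, hence $\Omega=B$ up to a null set, and force each $\varphi_i$ to be an honest Neumann eigenfunction with $\mu_i(\Omega)=\mu_1(B)$, which identifies $\Omega$ with a round ball.
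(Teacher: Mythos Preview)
Your framework---the radial profile $G$, the Brouwer center-of-mass point, and the QR flag making $\varphi_i\perp u_0,\dots,u_{i-1}$---is exactly the paper's. The gap is precisely where you flag it, and the route you propose through the tail-sum ratios $\tfrac{1}{\mu_i}\ge\big(\sum_{j\ge i}\int\varphi_j^2\big)\big/\big(\sum_{j\ge i}\int|\nabla\varphi_j|^2\big)$ is not how the argument closes.

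The paper uses only the single-index bound $\int_\Omega\varphi_i^2\le\tfrac{1}{\mu_i}\int_\Omega|\nabla\varphi_i|^2$ for each $i=1,\dots,n$, sums over all $n$ indices, and then---before integrating---splits $|\nabla\varphi_i|^2=G'(r)^2\,\omega_i(\xi)^2+G(r)^2\,|\nabla^{S_r}\omega_i|^2$ into radial and spherical parts. Bundling these into your single profile $\Psi$ discards exactly the structure needed. The radial piece is easy: $G'\equiv0$ for $r\ge R$ gives $\int_\Omega G'^2\omega_i^2\le\int_B G'^2\omega_i^2=\tfrac1n\int_B G'^2$, and then $\tfrac1n\sum_{i=1}^n\tfrac{1}{\mu_i}\le\tfrac{1}{n-1}\sum_{i=1}^{n-1}\tfrac{1}{\mu_i}$ since the sequence is nonincreasing. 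The spherical piece is the heart: one has the \emph{pointwise} constraints $\sum_{i=1}^n|\nabla^{S_r}\omega_i|^2=\tfrac{n-1}{r^2}$ and $|\nabla^{S_r}\omega_i|^2\le\tfrac{1}{r^2}$ for every $i$. Writing $a_i=|\nabla^{S_r}\omega_i|^2$ and $c=\tfrac{1}{r^2}$, these give at each point
\[
\sum_{i=1}^n\frac{a_i}{\mu_i}\ \le\ \sum_{i=1}^{n-1}\frac{a_i}{\mu_i}+\frac{1}{\mu_n}\sum_{i=1}^{n-1}(c-a_i)\ \le\ \sum_{i=1}^{n-1}\frac{c}{\mu_i}\,,
\]
because $c-a_i\ge0$ and $\tfrac{1}{\mu_n}\le\tfrac{1}{\mu_i}$. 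This is the rearrangement step you were groping for, but it must be done pointwise on the tangential gradients, not on integrated energies. Multiplying by $G(r)^2$ and using monotonicity of $G(r)/r$ (the Euclidean analogue of Lemmas~\ref{lm4.2} and~\ref{lm5.1}) passes $\int_\Omega G^2/r^2$ to $\int_B G^2/r^2$; monotonicity of $G$ gives $\int_\Omega G^2\ge\int_B G^2$. The index $n$ has already disappeared from the right-hand side, yielding \eqref{1.4}, and equality in the rearrangement step forces $\Omega=B$.
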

 Wang and Xia \cite{WX23} also proved  that inequality \eqref{1.4} holds true for bounded domains in hyperbolic space; Benguria, Brandolini and Chiacchio  proved \eqref{1.4} for bounded domains in a hemisphere \cite{BBC20} later. Besides, Wang and Xia \cite{WX21} proved a similar result as \eqref{1.4} for the ratio of Dirichlet eigenvalues.  

In 1996 Aithal and Santhanam \cite{AS96} proved that Szeg\"o-Weineberger inequality holds true for bounded domains in noncompact rank-1 symmetric spaces, and  domains contained in a geodesic ball of radius $i(M)/4$ in compact rank-1 symmetric spaces. Here and thereafter $i(M)$ denotes the injectivity radius of $M$ when $M$ is compact. Naturally, one may consider the Conjecture \ref{con1} in rank-1 symmetric spaces.
 Regarding Theorem \ref{thm},  we consider Wang-Xia type estimate \eqref{1.4} on bounded domains in the remaining rank-1 symmetric spaces, and 
prove sharp isoperimetric inequalities  for both compact and noncompact rank-1 symmetric spaces in this paper.  Our first result is  a sharp isoperimetric inequality for Neumann eigenvalues on bounded domains in compact rank-1 symmetric spaces.
      \begin{theorem} \label{thm1}
        Let 
        $\Omega$ be a bounded domain with smooth boundary,  contained in a geodesic ball of radius $\frac{i(M)}{4}$ in a compact rank-1 symmetric space $M$ with real dimension $m=kn$ (cf. Section \ref{sect2}). Let 
        \begin{align}\label{l}
            l=\begin{cases}
                m-1, &\text{if $k=1, m$}, \\
                \left[ \frac{m+k}{2} \right]-1 , & \text{if $1<k< m$},
            \end{cases}
        \end{align}
        where $\left[ \frac{m+k}{2} \right]$ is the  largest integer that does not exceed $\frac{m+k}{2}$. Then
        \begin{align}\label{1.5}
            \frac{1}{\mu_1(\Omega) } +\frac{1}{\mu_2(\Omega) } +\cdots+\frac{1}{\mu_l(\Omega) }  \geq \frac{l}{\mu_1(B)},
        \end{align}
        where $B$ is a geodesic ball in $M$ having  volume  $|\Omega|$.
        Furthermore, the equality of \eqref{1.5} occurs if and only if $\Omega$ is a geodesic ball.    
    \end{theorem}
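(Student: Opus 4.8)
The plan is to follow the variational strategy of Wang--Xia \cite{WX22}, adapted to the symmetric-space setting via the machinery of Aithal--Santhanam \cite{AS96}. After rescaling we may assume $B$ is a geodesic ball centered at the origin $o$, with $|B| = |\Omega|$ and radius $R < i(M)/4$. Let $u_1$ be a first nonzero Neumann eigenfunction on $B$; by the results on rank-$1$ symmetric spaces one has $u_1 = g(r)\phi$, where $r = \dist(\cdot, o)$, $g$ solves the appropriate radial ODE (the Bessel-type equation with the warping factor of $M$), and $\phi$ is a first eigenfunction of the Laplacian on the geodesic sphere, normalized so that the $\mu_1(B)$-eigenspace has dimension exactly $l$ (this is where the value of $l$ in \eqref{l} enters: it is the multiplicity forced by the isotropy representation of $M$, with the restriction to a hemisphere-type ball needed so that the sign/monotonicity of $g$ is controlled). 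Extend $g$ to $[0,\infty)$ by setting $g \equiv g(R)$ on $[R,\infty)$; then $G := g\circ r$ is a Lipschitz function on all of $M$, monotone nondecreasing in $r$.

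Next I would produce $l$ trial functions on $\Omega$. Writing $\{\phi_1,\dots,\phi_l\}$ for an $L^2$-orthonormal basis of the $\mu_1(B)$-eigenspace on the sphere, set $w_\alpha = G \cdot (\phi_\alpha \circ \Pi)$, where $\Pi$ is the radial projection to the unit sphere in the tangent space at $o$ (so $w_\alpha$ restricted to $B$ is exactly $u_1$ in the $\alpha$-th direction). By the standard center-of-mass argument — a continuity/degree argument applied to the map $\Omega \mapsto \int_\Omega G\,\phi_\alpha$ — one translates $\Omega$ (using the isometry group of $M$, which acts transitively with the required isotropy) so that all $w_\alpha$ are orthogonal to the constants and, crucially, to each other in $L^2(\Omega)$; they are then admissible test functions spanning an $l$-dimensional subspace, so $\mu_\alpha(\Omega) \le R_\alpha := \big(\int_\Omega |\nabla w_\alpha|^2\big)\big/\big(\int_\Omega w_\alpha^2\big)$ after relabeling the $w_\alpha$ so that $R_1 \le \dots \le R_l$. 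The key point of the Wang--Xia refinement is to sum the \emph{reciprocals}: $\sum_\alpha \mu_\alpha(\Omega)^{-1} \ge \sum_\alpha R_\alpha^{-1} \ge \big(\sum_\alpha \int_\Omega w_\alpha^2\big)\big/\big(\sum_\alpha \int_\Omega |\nabla w_\alpha|^2 / l \cdot \text{(?)}\big)$ — more precisely one uses $\sum 1/R_\alpha \ge \big(\sum \int w_\alpha^2\big) / \max_\alpha \int |\nabla w_\alpha|^2$ together with the pointwise identity $\sum_\alpha |\nabla w_\alpha|^2 = G'(r)^2 + \frac{G(r)^2}{\sn(r)^2}\lambda_1(\Sph^{n-1}\text{-type})$ (the gradient sum is radial!), which makes $\sum_\alpha \int_\Omega |\nabla w_\alpha|^2$ and $\sum_\alpha \int_\Omega w_\alpha^2$ both radial integrals.

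The decisive comparison step is then analytic: one shows that the radial function
\[
    \Theta(r) \;=\; G'(r)^2 + \frac{\lambda_1}{\sn^2(r)}\,G(r)^2 - \mu_1(B)\,G(r)^2
\]
is $\le 0$ for $r \le R$ and $\le 0$ (indeed $= -\mu_1(B)G(R)^2 \cdot$ something $\le 0$) for $r > R$, and that $G^2$ is nondecreasing while $\Theta$ has the right monotonicity, so that the rearrangement inequality (comparing $\int_\Omega$ against $\int_B$ using $|\Omega|=|B|$ and the monotonicity of the integrands in $r$) yields
\[
    \sum_{\alpha=1}^l \int_\Omega |\nabla w_\alpha|^2 \;\le\; \mu_1(B)\sum_{\alpha=1}^l \int_\Omega w_\alpha^2 .
\]
Combined with the reciprocal inequality above this gives $\sum_{\alpha=1}^l \mu_\alpha(\Omega)^{-1} \ge l/\mu_1(B)$. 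I expect the main obstacle to be exactly the verification of the sign and monotonicity properties of the radial comparison function $\Theta$ on $[0,\infty)$ in the non-space-form cases $1<k<m$: there the warping function $\sn(r)$ and the eigenvalue $\lambda_1$ entering $g$'s ODE depend on the fine structure of $M$ (the Berger-sphere geometry of geodesic spheres in $\CP^n,\H^n,\mathrm{Ca}P^2$), and the radius restriction $R < i(M)/4$ is precisely what is needed to keep $g$ increasing and the Bessel-type quotient monotone — checking this uniformly, and pinning down the correct value $l = [(m-k+1)/2]$ as the dimension of the relevant eigenspace, is the delicate part. For the equality case, equality in the rearrangement step forces $\Omega$ to be a sublevel set of $r$, i.e. a geodesic ball.
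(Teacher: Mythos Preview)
Your proposal has the right overall architecture (Weinberger-type trial functions, center-of-mass, radial monotonicity, rearrangement), but there is a genuine conceptual gap at the heart of it: you have misidentified where the number $l$ comes from, and this breaks the key computation.

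The $\mu_1(B)$-eigenspace on a geodesic ball in a rank-$1$ symmetric space has dimension $m$, not $l$; the eigenfunctions are $g(r)\omega_i(\xi)$ for all $m$ linear coordinate functions $\omega_1,\dots,\omega_m$ on $\mathbb{S}^{m-1}$ (see Section~3.1 of the paper). So $l$ is \emph{not} a multiplicity. It enters instead through a pointwise bound on the individual spherical gradients: because the geodesic spheres in $M$ are Berger-type (not round) when $1<k<m$, one has
\[
|\nabla^{S_r}\omega_i(\xi)|^2 \;\le\; -\tfrac{1}{l}\,H'(r),\qquad \text{while}\qquad \sum_{i=1}^{m}|\nabla^{S_r}\omega_i(\xi)|^2 \;=\; -H'(r).
\]
The value of $l$ in \eqref{l} is exactly the largest integer for which the first inequality holds on the relevant range of $r$.

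This matters because the step you call ``the gradient sum is radial'' is only true when you sum over all $m$ trial functions: $\sum_{\alpha=1}^{l}\omega_\alpha^2$ and $\sum_{\alpha=1}^{l}|\nabla^{S_r}\omega_\alpha|^2$ are \emph{not} functions of $r$ alone when $l<m$, so your comparison function $\Theta(r)$ is not well-defined from only $l$ test functions. The paper therefore uses all $m$ functions $v_i=G(r)\omega_i$, makes each $v_i$ orthogonal to $u_0,\dots,u_{i-1}$ by a QR rotation of the $\omega$'s (not mutual $L^2(\Omega)$-orthogonality via center-of-mass, which would not give $\mu_\alpha(\Omega)\le R_\alpha$), sums the resulting inequalities $\int v_i^2\le \mu_i(\Omega)^{-1}\int|\nabla v_i|^2$ over $i=1,\dots,m$, and then collapses the spherical-gradient part from $m$ terms down to $l$ terms via the algebraic step
\[
\sum_{i=1}^{m}\frac{|\nabla^{S_r}\omega_i|^2}{\mu_i(\Omega)}
\;\le\; \sum_{i=1}^{l}\frac{|\nabla^{S_r}\omega_i|^2}{\mu_i(\Omega)}
+\frac{1}{\mu_{l+1}(\Omega)}\sum_{i=1}^{l}\Bigl(-\tfrac{H'(r)}{l}-|\nabla^{S_r}\omega_i|^2\Bigr)
\;\le\; -\frac{H'(r)}{l}\sum_{i=1}^{l}\frac{1}{\mu_i(\Omega)},
\]
which uses both the identity and the pointwise bound above. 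This is the missing idea in your outline; after it, the radial monotonicity you describe (here: $G(r)/(\sin r\cos r)$ nonincreasing on $(0,\pi/4)$, proved using the lower bound $\mu_1(B)\ge 2(m+k)$) and the rearrangement comparison with $B$ go through essentially as you sketched, and the equality analysis is as you say.
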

    Our second result is a sharp isoperimetric inequality for Neumann eigenvalues in  noncompact rank-1 symmetric spaces. 
    \begin{theorem} \label{thm2}
        Let $\Omega$ be a bounded domain with smooth boundary in a noncompact rank-1 symmetric space $M$  with real dimension $m=kn$ (cf. Section \ref{sect2}). Then
        \begin{align}\label{1.6}
               \frac{1}{\mu_1(\Omega) } +\frac{1}{\mu_2(\Omega) } +\cdots+\frac{1}{\mu_{p}(\Omega) }  \geq \frac{p}{\mu_1(B)},
        \end{align}
    where $B$ is a geodesic ball in $M$ having  volume  $|\Omega|$, and
    \begin{align}\label{p}
        p=\begin{cases}
            k(n-1), & \text{if  $n>1$},\\
            kn-1, & \text{if $n=1$}.
        \end{cases}
    \end{align}
        Furthermore, the equality of \eqref{1.6} occurs if and only if $\Omega$ is a geodesic ball.    
    \end{theorem}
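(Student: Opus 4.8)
The plan is to follow the variational strategy pioneered by Weinberger and refined by Wang--Xia, adapted to the geometry of noncompact rank-1 symmetric spaces. Let $B$ be the centered geodesic ball (center $o$) with $|B| = |\Omega|$, and let $u$ be a first Neumann eigenfunction on $B$; by symmetry $u = u(r)$ is radial, where $r = \dist(\cdot, o)$. Following the standard recipe, I would use the $m$ functions $G_i(x) = g(r) \theta_i$, where $\theta_i$ are the components of the unit radial vector field in suitable coordinates and $g$ is the radial profile of $u$ extended by the constant $g(R)$ (its value at the boundary radius $R$ of $B$) for $r \geq R$; one checks $g$ is nondecreasing and $g(0)=0$ so the $G_i$ extend smoothly. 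The heart of the matter is that on a noncompact rank-1 symmetric space the number of such ``independent'' test directions one can genuinely exploit is $p$ as in \eqref{p}: because the metric on geodesic spheres is not round but a Berger-type metric (the Hopf fibration directions are scaled differently), the radial Hessian computation naturally splits the $m-1$ sphere directions into a group of $k-1$ ``vertical'' (fiber) directions and $k(n-1)$ ``horizontal'' directions, and only the horizontal block (of dimension $k(n-1)$, resp.\ $m-1$ when $k=m$, i.e.\ the complex/quaternionic/octonionic hyperbolic versus real hyperbolic dichotomy) yields the clean differential inequality needed.

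Concretely, the steps in order would be: (i) Set up coordinates on $M$ adapted to the isometry group so that the geodesic sphere of radius $r$ carries the explicit Berger metric with warping functions (sinh-type for the noncompact case) recorded in Section \ref{sect2}; compute $\Delta G_i$ and $|\nabla G_i|^2$ for the horizontal test functions, obtaining expressions of the form $\Delta G_i = -\mu_1(B) G_i$ on $B$ plus controlled terms coming from the non-roundness, and a monotonicity statement for the resulting one-variable function $B(r) = g'(r)^2 + (\text{angular part}) \tfrac{g(r)^2}{\text{warp}^2}$. (ii) Use a center-of-mass / topological (Brouwer degree) argument to place $\Omega$ so that all $p$ test functions $G_i$ are orthogonal to the constants, hence admissible for the variational characterization $\mu_i(\Omega) \le \big(\int |\nabla G_i|^2\big)/\big(\int G_i^2\big)$ — here one must be careful that $p$ orthogonality conditions can be met simultaneously, which is exactly why the argument stops at $i = p$ and not $i = m$. (iii) Sum the variational inequalities in the reciprocal form, i.e.\ bound $\sum_{i=1}^p \mu_i(\Omega)^{-1}$ from below by $\sum \int G_i^2 / \int |\nabla G_i|^2$, and then feed in the pointwise monotonicity from (i) together with the coarea formula and the fact that $|\Omega| = |B|$ to collapse the sum to $p/\mu_1(B)$. (iv) Trace the equality case: equality forces $B(r)$ constant beyond $R$ and the support condition to degenerate, which by the same coarea/monotonicity rigidity forces $\Omega = B$ up to isometry.

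The main obstacle I anticipate is step (i) combined with the bookkeeping in step (iii): on a noncompact rank-1 symmetric space the Laplacian of $G_i$ does not simply equal $-\mu_1(B)G_i$ because the ambient space is not rotationally symmetric in the naive sense — the Berger metric means the radial ODE satisfied by $u$ on $B$ involves the ``mean curvature'' of geodesic spheres $H(r) = (\log V'(r))'$, and when one transplants $g(r)\theta_i$ to $\Omega$ the excess terms must be shown to have a favorable sign after integration. The technical device that saves this (as in \cite{AS96} and \cite{WX22}) is that for the horizontal directions the relevant comparison function $B(r)$ is monotone in $r$ precisely because the sectional curvatures in horizontal $2$-planes are pinched between $-4$ and $-1$ (in the normalized metric), and on $[0,R]$ the eigenfunction profile $g$ solves an ODE making $B' \le 0$, while on $[R,\infty)$ monotonicity is immediate since $g$ is constant there; one then needs the ``all test functions have the same Rayleigh-type bound'' trick, namely summing over $i$ in a single orbit of the isometry group so the messy angular integrals reassemble into the volume of the geodesic sphere. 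I would expect the compact-case Theorem \ref{thm1} to require the additional radius restriction $r \le i(M)/4$ precisely to keep $g$ nondecreasing and the comparison monotone, whereas in the noncompact case no such restriction is needed, which is why Theorem \ref{thm2} is cleaner.
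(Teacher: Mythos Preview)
Your overall strategy --- Weinberger-type trial functions $G(r)\omega_i(\xi)$ built from the radial profile on the comparison ball, center-of-mass placement, monotonicity of a radial quantity, and rearrangement --- is indeed the skeleton of the paper's proof. But two of your key claims about \emph{why} the argument works are wrong, and without fixing them the proof would not go through.

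\textbf{First gap: admissibility for higher eigenvalues.} You write that the Brouwer argument makes the $G_i$ ``orthogonal to the constants, hence admissible for the variational characterization $\mu_i(\Omega)\le\ldots$''. Orthogonality to constants only makes $G_i$ admissible for $\mu_1(\Omega)$. To use $G_i$ as a test function for $\mu_i(\Omega)$ you need $G_i\perp u_0,u_1,\ldots,u_{i-1}$. The paper handles this by a QR-factorization: after fixing $o$ via Brouwer, one rotates the orthonormal frame of $T_oM$ (equivalently, passes to new linear combinations of the $\omega_i$) so that the matrix $\big(\int_\Omega v_i u_j\big)_{i,j}$ becomes upper-triangular, giving $\int_\Omega v_i u_j=0$ for $j<i$. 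This step is essential and absent from your plan.

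\textbf{Second gap: the origin of $p$.} You assert that the argument stops at $p$ because only $p$ orthogonality conditions can be met. This is not the mechanism. The paper uses \emph{all $m$} trial functions and sums
\[
\int_\Omega G^2 \;=\;\sum_{i=1}^m\int_\Omega v_i^2\;\le\;\sum_{i=1}^m\frac{1}{\mu_i(\Omega)}\int_\Omega\Big(G'^2\omega_i^2+G^2|\nabla^{S_r}\omega_i|^2\Big).
\]
The number $p$ enters through the pointwise bound $|\nabla^{S_r}\omega_i|^2\le -\tfrac{1}{p}H'(r)$ (equation \eqref{2.3}), which combined with $\sum_{i=1}^m|\nabla^{S_r}\omega_i|^2=-H'(r)$ allows the chain
\[
\sum_{i=1}^m\frac{|\nabla^{S_r}\omega_i|^2}{\mu_i(\Omega)}
\;\le\;\sum_{i=1}^p\frac{|\nabla^{S_r}\omega_i|^2}{\mu_i(\Omega)}
+\frac{1}{\mu_{p+1}(\Omega)}\sum_{i=1}^p\Big(-\tfrac{H'(r)}{p}-|\nabla^{S_r}\omega_i|^2\Big)
\;\le\;\frac{-H'(r)}{p}\sum_{i=1}^p\frac{1}{\mu_i(\Omega)}.
\]
So $p$ is dictated by the Berger-metric geometry of the sphere (the ratio of $-H'(r)$ to the worst single angular gradient), not by how many orthogonality constraints one can impose. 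Your ``horizontal directions only'' picture has the right geometric intuition behind \eqref{2.3}, but if you literally use only $p$ test functions you lose the identity $\sum\omega_i^2=1$ and $\sum|\nabla^{S_r}\omega_i|^2=-H'(r)$, and the integrals do not collapse.

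Finally, the monotonicity the paper actually proves (Lemma \ref{lm5.1}) is that $G(r)/\sinh r$ is nonincreasing, which directly gives that $-G(r)^2H'(r)$ is nonincreasing; your formulation via a combined function $B(r)=g'^2+(\text{angular})g^2/\text{warp}^2$ is the classical Weinberger device, but it does not interface cleanly with the weighted sum above --- one needs the angular and radial pieces handled separately, as in \eqref{5.6} and \eqref{5.9}.
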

   Obviously, both Theorem \ref{thm1} and Theorem \ref{thm2} imply 
    $$
    \mu_1(\Omega)\le \mu_1(B),
    $$
    recovering the main results of \cite{AS96} proved by  Aithal and Santhanam. Moreover if $k=1$, i.e. $M$ is a hyperbolic space of dimension $n$, $p=n-1$, hence Theorem  \ref{thm2} recovers Theorem 1.2 of \cite{WX23} proved by Wang and Xia.
    On bounded domains in rank-1 symmetric spaces, it  remains an interesting question that whether Ashbaugh and Benguria's conjecture (Conjecture \ref{con1}) holds  or not.
    
    The rest of the paper is organized as follows. In Section 2, we recall the geometry of rank-1 symmetric spaces. In Section 3, we give some useful tools needed in the proofs of main theorems. Sections 4 and 5 are devoted to proving  Theorem \ref{thm1} and Theorem \ref{thm2}.
    
\section{Geometry of Rank-1 Symmetric spaces}\label{sect2}
Let $M$ denote any one of the following rank-1 symmetric spaces with real dimension $m$: Complex projective space $\mathbb{C}\mathbb{P}^n$, quarternionic projective space $\mathbb{H} \mathbb{P}^n$, the Cayley projective plane $\mathbf{Ca} \mathbb{P}^2$ or their non-compact duals. Let $\mathbb{K}$ denote $\mathbb{R}$, $\mathbb{C}$,$\mathbb{H}$ or $\mathbf{Ca}$, and $k=\dim_{\mathbb{R}} \mathbb{K}$, then $m=kn$.

We recall some basic facts about geodesic polar coordinates $(r, \xi)$ in both compact and noncompact rank-1 symmetric spaces, see   \cite[Section 3]{AS96} and \cite[Section 2]{LWW22} for more details. Let $o\in M$  be the center of  geodesic polar coordinates, and $J(r)$ denote the Riemannian density function. Then 
\begin{align*}
        J(r)=\sin^{m-1} r \cos^{k-1}r
    \end{align*}
for $r\in (0, \pi/2)$ when  $M$ is compact;
    \begin{align*}
        J(r)=\sinh^{m-1} r \cosh^{k-1} r
    \end{align*}
 for $r>0$ when $M$ is noncompact.
The trace of the second fundamental form  of $S(r)$ is 
\begin{align*}
    H(r)=\frac{J'(r)}{J(r)},
\end{align*}
namely
\begin{align*}
    H(r)=\begin{cases}
    (m-1)\cot r- (k-1) \tan r, &\quad \text{if $M$ is compact},\\
       (m-1) \coth r+(k-1) \tanh r, &\quad \text{if $M$ is noncompact}.\\
    \end{cases}
\end{align*}
   The Laplace operator of $M$ is given by
    \begin{align*}
        \Delta_M=\frac{\partial^2 }{\partial r^2 }+ H(r) \frac{\partial }{\partial r }+\Delta_{S_r}, 
    \end{align*}
    where $\Delta_{S_r}$ denotes the Laplacian of $S_r$. Moreover the first non-zero eigenvalue of $\Delta_{S_r}$ is
    \begin{align*}
        \lambda_1(S_r)=-H'(r),
    \end{align*}
    and the associated eigenfunctions are the linear coordinate functions restricted to $\mathbb{S}^{m-1}$, denoted by $\omega_i(\xi ) \, (1\leq i \leq m) $, satisfying
    \begin{align}\label{2.1}
        \sum_{i=1}^m |\nabla^{S_r} \omega_i(\xi) |^2 = -H'(r).
    \end{align}
Moreover it is known from the proof of \cite[Lemma 4.11]{CR19} that
   \begin{align*}
      |\nabla^{S_r} \omega_i(\xi)|^2 =\begin{cases}
   \sum\limits_{j=k+1}^m \frac{\langle \xi_i,\eta_j \rangle^2 }{\sin^2 r } +  \sum\limits_{j=2}^k \frac{\langle \xi_i,\eta_j \rangle^2 }{\sin^2 r \cos^2 r }, &\quad \text{if $M$ is compact},\\
\sum\limits_{j=k+1}^m \frac{\langle \xi_i,\eta_j \rangle^2 }{\sinh^2 r } +  \sum\limits_{j=2}^k \frac{\langle \xi_i,\eta_j \rangle^2 }{\sinh^2 r \cosh^2 r}, &\quad \text{if $M$ is noncompact},
    \end{cases}
\end{align*}
where  $\{ \eta_k \}_{k=1}^m$ is an orthonormal basis of $T_o M$ with $ \eta_1=( \omega_1(\xi), \cdots,\omega_m(\xi) )$ and $\eta_{i+1}=J \eta_i$ for $i=1,\cdots, k-1$. Then we estimate that for compact case 
    \begin{align}\label{2.2}
    \begin{split}
        |\nabla^{S_r} \omega_i(\xi)|^2 &= \sum_{j=k+1}^m \frac{\langle \xi_i,\eta_j \rangle^2 }{\sin^2 r } + \sum_{j=2}^k \frac{\langle \xi_i,\eta_j \rangle^2 }{\sin^2 r \cos^2 r } \\
       &\leq \begin{cases}
       \frac{1}{\sin^2 r \cos^2 r } ,& k\neq 1\\
       \frac{1}{\sin^2 r },& k=1
       \end{cases}
       \\&\leq  -\frac{1}{l}H'(r)
       \end{split}
    \end{align}
for $0<r\leq \frac{\pi}{4}$, 
and for noncompact case
	\begin{align}\label{2.3}
 \begin{split}
		|\nabla^{S_r} \omega_i(\xi)|^2 &= \sum_{j=k+1}^m \frac{\langle \xi_i,\eta_j \rangle^2 }{\sinh^2 r } + \sum_{j=2}^k \frac{\langle \xi_i,\eta_j \rangle^2 }{\sinh^2 r \cosh^2 r } \\
        & \leq \begin{cases}
       \frac{1}{\sinh^2 r }, &  k< m\\
       \frac{1}{\sinh^2 r \cosh^2 r },&  k=m
       \end{cases}\\
        & \leq -\frac{1}{p}H'(r),
        \end{split}
	\end{align}
    where $l$ and $p$ are defined by \eqref{l} and \eqref{p} respectively.
\section{Some mathematical tools needed}
    In this section,  we will give some properties of the first nonzero  eigenvalue  of Neumann Laplacian and corresponding eigenfunctions on round balls, and construct trial functions for lower order Neumann eigenvalues on bounded  domains  in rank-1 symmetric spaces.
    \subsection{Properties of eigenfunctions for geodesic balls} Let $M$ be a rank-1 symmetric space and $B$ be a round geodesic ball  in $M$ with radius $R$. If $M$ is compact, we assume further that $R< \frac \pi 4$. It is known from \cite[Section 3]{AS96}  that 
    \begin{align*}
        \mu_1(B)=\mu_2(B)=\cdots=\mu_m(B),
    \end{align*}
    and  the corresponding  eigenfunctions are given by
    \begin{align*}
        h_i(r,\xi)=g(r)\omega_i(\xi),\, i=1,2,\cdots,m,
    \end{align*}
    where $g(r)$ satisfies the following one dimensional boundary value problem
    \begin{equation} \label{3.1}
        \begin{cases}
            g''(r)+H(r)g'(r)+\big(\mu_1(B)+H'(r)\big)g(r)=0,\quad r\in (0, R), \\
            g(0)=0,\,g'(R)=0,
        \end{cases}
    \end{equation}
    and $\omega_i$'s are the restrictions of the linear coordinate functions on $\mathbb{S}^{m-1}$. Moreover we can choose a solution $g$ of \eqref{3.1} satisfying $g'(0)=1$, hence $g(r)>0$ in $(0, R]$ and $g'(r)>0$ in $[0, R)$. 
Furthermore it is easy to check that $\mu_1(B)$ is the first eigenvalue corresponding to the quotient: 
    \begin{align}\label{3-2}
       \mathcal{Q}(\vp)= \frac{\int_0^R \big(\vp'(r)^2-H'(r)\vp(r)^2\big) J(r)\, dr}{\int_0^R \vp^2(r) J(r)\, dr}
    \end{align}
with constraints $\vp(0)=0$ and $\vp\in C^1([0,R])$, where $J(r)$ is the Riemannian density function defined in Section \ref{sect2}.

    \subsection{Trial functions for lower order Neumann eigenvalues} From here on, we assume $\Omega$ to be a bounded domain with smooth boundary in a rank-1 symmetric space $M$. If $M$ is compact, we assume further that $\Omega$ is contained in a geodesic ball of radius $i(M)/4$.  Denote by $u_i$ an eigenfunction corresponding to $\mu_i(\Omega)$, then the Neumann eigenvalue $\mu_i(\Omega)$ can be characterized variationally by
    \begin{align}\label{3.2}
        \mu_i(\Omega)=\min\Big\{\frac{\int_\Omega |\nabla u|^2\, dx}{\int_\Omega u^2\, dx}: u\in H^1(\Omega)\setminus \{0\}, u\in \operatorname{span}\{u_0, u_1, \cdots, u_{i-1}\}^\perp\Big\}.
    \end{align}
 Let $B\subset M$ be a geodesic ball such that $|\Omega|=|B|$, and denote by $R$ the radius of $B$. Define  $G(r):[0, \infty) \to [0, \infty)$ by 
        \begin{align}\label{3.3}
        G(r)=\begin{cases}
            g(r),&\quad r<R,\\
            g(R),&\quad r\geq R,
        \end{cases}
    \end{align}
    where $g(r)$ is defined by \eqref{3.1}.  By Brouwer's fixed point theorem, we can choose a proper origin $o\in \Omega$ such that
    \begin{align}\label{3.4}
        \int_{\Omega} G(r_o)\omega_i(\xi)\, dx=0,\, i=1,2,\cdots,m,
    \end{align}
    where $r_o(x)=\text{dist} (o,x)$ is the distance from $o$ to $x$ in $M$. 
    
    From now on, we fix the point $o$ so that \eqref{3.4} holds. Let $(r, \xi)$ denote the polar coordinates centered at $o$.
     For $1\le i\le m$ and  $1\le j \le m$, we define
    \begin{align}\label{3.5}
        v_i(x)=G(r)\omega_i(\xi),
    \end{align}
     and let
    \begin{align*}
    q_{ij}= \int_{\Omega} v_i(x) u_j(x) \, dx.\end{align*}
By  QR-factorization, there exists  an orthogonal matrix $U=(a_{ij})$ such that 
    \begin{align*}
        0=\int_{\Omega} \sum_{k=1}^n a_{ik} v_k(x)  u_j(x)\, dx
    \end{align*}
    for all $1\le j< i\le m$.
   Therefore we can choose a proper basis of $T_o M$  so that
    \begin{align}\label{3.6}
        \int_{\Omega} v_i(x)u_j(x)\, dx=0,
    \end{align}
    for $1\le j<i\le m$. According to \eqref{3.4} and \eqref{3.6}, we find that $v_i(x)$ is a trial function for $\mu_i(\Omega)$. Thus in view of \eqref{3.2}, we conclude that
    \begin{align}\label{3.7}
        \int_{\Omega} v_i(x)^2\, dx \leq \frac{1}{\mu_i(\Omega) } \int_{\Omega} |\nabla v_i(x) |^2\, dx
    \end{align}
    for $1\le i\le m$.
\section{proof of theorem \ref{thm1}}
In this section,  we consider the case of compact rank-1 symmetric spaces  and prove Theorem \ref{thm1}. To begin with, we recall the  following lower bound for the first nonzero Neumann eigenvalue on geodesic balls \cite[Corollary 1]{AS96}, which will be used later on.
    \begin{lemma} [\cite{AS96}] \label{lm4.1}
       Let $M$ be a rank-1 symmetric space of compact type (see Section \ref{sect2}), and $B\subset M$ be a geodesic ball of radius $R$. If $R\le \frac \pi 4$, then
       \begin{align}\label{4.1}
       \mu_1(B)\geq 2(m+k).
       \end{align}
    \end{lemma}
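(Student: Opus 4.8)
The plan is to argue directly from the variational characterization \eqref{3-2}. Since $\mu_1(B)=\inf\{\mathcal{Q}(\vp):\vp(0)=0\}$, it suffices to prove the one–dimensional weighted Hardy-type inequality
$$\int_0^R\bigl(\vp'(r)^2-H'(r)\vp(r)^2\bigr)J(r)\,dr\ \ge\ 2(m+k)\int_0^R\vp(r)^2 J(r)\,dr$$
for every $\vp$ with $\vp(0)=0$. The crux is to write down the right barrier, and I claim it is $w(r):=\sin r\cos r=\frac12\sin 2r$. Indeed, $w$ solves \eqref{3.1} with $\mu_1(B)$ replaced by $2(m+k)$, i.e.
$$w''(r)+H(r)w'(r)+\bigl(2(m+k)+H'(r)\bigr)w(r)=0\qquad\text{on }(0,\pi/2).$$
Since $w''=-4w$, this reduces to the elementary trigonometric identity $\bigl(2(m+k)-4+H'(r)\bigr)\tfrac12\sin 2r=-H(r)\cos 2r$, which follows from $\tfrac12\sin 2r\cdot\csc^2 r=\cot r$, $\tfrac12\sin 2r\cdot\sec^2 r=\tan r$ and the explicit formulas for $H$ and $H'$ in the compact case. (Conceptually, $2(m+k)$ is exactly $\mu_1$ of the geodesic ball of radius $\pi/4$, where $w'(\pi/4)=\cos(\pi/2)=0$ makes $w$ the function $g$ itself; it is also the first Laplace eigenvalue of the closed manifold $M$, which explains why both $2(m+k)$ and the radius $\pi/4$ arise.) For $R\le\pi/4<\pi/2$ we have $w>0$ on $(0,R]$ and $w'(R)=\cos 2R\ge 0$.

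With the barrier in hand the inequality follows from a completion of squares. From $\vp'^2=\bigl(\vp'-\frac{w'}{w}\vp\bigr)^2+\frac{w'}{w}(\vp^2)'-\frac{w'^2}{w^2}\vp^2$, integrating the middle term by parts against $J$ and using $J'=HJ$ together with the ODE for $w$, one obtains the identity
$$\int_0^R\bigl(\vp'^2-H'\vp^2\bigr)J\,dr=\int_0^R\Bigl(\vp'-\frac{w'}{w}\vp\Bigr)^2 J\,dr+\Bigl[\frac{w'}{w}\vp^2 J\Bigr]_0^R+2(m+k)\int_0^R\vp^2 J\,dr.$$
On the right, the integral of the square is nonnegative; the boundary term at $r=R$ equals $\frac{w'(R)}{w(R)}\vp(R)^2 J(R)\ge 0$ because $w(R)>0$ and $w'(R)=\cos 2R\ge 0$; and the boundary term at $r=0$ vanishes, since $\frac{w'}{w}\sim\frac1r$ while $J(r)\sim r^{m-1}$ and $\vp^2\to 0$ there (a routine density reduction lets us take $\vp$ smooth and vanishing near $r=0$, making this immediate, and then approximate the genuine eigenfunction $g$). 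Discarding the nonnegative terms gives the Hardy inequality, hence $\mathcal{Q}(\vp)\ge 2(m+k)$ for all admissible $\vp$, and therefore $\mu_1(B)\ge 2(m+k)$.

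The only genuinely non-routine step is guessing the barrier $w=\sin r\cos r$; once it is written down, verifying the ODE is a one-line trigonometric computation and the rest is the standard completion-of-squares manipulation. The hypothesis $R\le\pi/4$ enters only through the elementary inequality $\cos 2R\ge 0$, which is exactly what forces the boundary term at $r=R$ to have the correct sign. Finally, tracking equality in all of the above recovers the sharp statement that $\mu_1(B)=2(m+k)$ if and only if $R=\pi/4$ (forcing $\vp\propto\sin r\cos r$), which matches the equality case of \cite{AS96}.
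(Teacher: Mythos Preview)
Your argument is correct. Note, however, that the paper does not supply its own proof of Lemma~\ref{lm4.1}: the statement is quoted from \cite[Corollary~1]{AS96} and used as a black box in the proof of Lemma~\ref{lm4.2}. So there is nothing in the present paper to compare your proof against line by line.

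That said, your approach is essentially the natural one and is consistent with how such bounds are obtained in \cite{AS96}. The identification of the barrier $w(r)=\sin r\cos r$ as the solution of \eqref{3.1} with $\mu_1(B)$ replaced by $2(m+k)$ is exactly right (and, as you note, $w$ is precisely the eigenfunction $g$ when $R=\pi/4$, so the inequality is sharp there). The completion-of-squares identity you write down is the standard Picone/ground-state transform computation, and the sign of the boundary term at $r=R$ is indeed governed by $w'(R)=\cos 2R\ge 0$, which is the sole place the hypothesis $R\le\pi/4$ enters. The boundary analysis at $r=0$ is also fine: for smooth $\vp$ with $\vp(0)=0$ the term $\frac{w'}{w}\vp^2 J\sim r^{m}$ vanishes, and density in the weighted $H^1$ space handles the general case. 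An equivalent way to package your argument is via Sturm comparison or domain monotonicity of the one-dimensional eigenvalue in $R$ (since $g>0$, $g'>0$ on $(0,R)$), which yields $\mu_1(B_R)\ge\mu_1(B_{\pi/4})=2(m+k)$; your direct variational computation has the advantage of being entirely self-contained.
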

 \begin{lemma}\label{lm4.2}
 Let $G(r)$ be the function defined in \eqref{3.3}. Then the function
 $$
 \varphi(r):=\frac{G(r)}{\sin r \cos r}
 $$
is nonincreasing in $(0,\frac{\pi}{4})$.
    \end{lemma}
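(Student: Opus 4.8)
The plan is to reduce the monotonicity of $\varphi(r)=G(r)/(\sin r\cos r)$ to an ODE comparison argument, separately on the two regimes $r<R$ and $r\ge R$. On $[R,\frac\pi4)$ the numerator $G(r)=g(R)$ is a positive constant while $\sin r\cos r=\tfrac12\sin 2r$ is increasing on $(0,\frac\pi4)$, so $\varphi$ is manifestly decreasing there; this case is immediate. The substance of the lemma is therefore the interval $(0,R)$, where $G(r)=g(r)$ and $g$ solves the boundary value problem \eqref{3.1} with $g(0)=0$, $g'(0)=1$, $g>0$ on $(0,R]$ and $g'>0$ on $[0,R)$.

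On $(0,R)$ I would compute $\varphi'(r)$ and show it is $\le 0$. Writing $w(r)=\sin r\cos r=\tfrac12\sin 2r$, we have $\varphi=g/w$ and $\varphi'=(g'w-gw')/w^2$, so it suffices to prove $N(r):=g'(r)w(r)-g(r)w'(r)\le 0$ on $(0,R)$. At $r=0$ one checks $N(0)=0$ (since $w(0)=0$ and $g(0)=0$), so it is enough to show $N'(r)\le 0$, or at least that $N$ cannot become positive. Differentiating, $N'=g''w-gw''$. Here the key input is the ODE: from \eqref{3.1}, $g''=-H(r)g'-(\mu_1(B)+H'(r))g$ with $H(r)=(m-1)\cot r-(k-1)\tan r$. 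Substituting this in, $N'$ becomes an expression in $g,g'$ with trigonometric coefficients; using $g'w - gw' = N$ to re-express the $g'$–terms, one should arrive at an inequality of the form $N'(r) \le c(r) N(r)$ for some function $c(r)$ on $(0,R)$ (a Gronwall-type differential inequality). Since $N(0)=0$, this forces $N(r)\le 0$ throughout $(0,R)$, giving $\varphi'\le 0$ there. Finally, continuity of $\varphi$ at $r=R$ together with monotonicity on each side yields that $\varphi$ is nonincreasing on all of $(0,\frac\pi4)$.

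The main obstacle I anticipate is the computation showing $N' \le c\,N$: after inserting $g''$ from \eqref{3.1} and using $w''=-2\sin 2r = -4w$ (so $w''=-4w$, since $w=\tfrac12\sin 2r$), one gets
\begin{align*}
N'(r) &= g''w - g w'' = \big(-H g' - (\mu_1(B)+H')g\big)w + 4gw \\
&= -H\,(g'w) + \big(4-\mu_1(B)-H'\big) g w.
\end{align*}
Writing $g'w = N + gw'$ gives $N' = -H\,N + \big(4 - \mu_1(B) - H' - Hw'/w\big)gw$. Since $g>0$ and $w>0$ on $(0,R)$, the sign of the remaining term is governed by $\Phi(r):=4-\mu_1(B)-H'(r)-H(r)w'(r)/w(r)$, and the crux is to verify $\Phi(r)\le 0$ on $(0,R)\subset(0,\frac\pi4)$. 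This is where the eigenvalue lower bound from Lemma \ref{lm4.1}, namely $\mu_1(B)\ge 2(m+k)$, enters: one computes $H'(r)$ and $w'/w=2\cot 2r$ explicitly, bounds $-H'(r)-H(r)\cdot 2\cot 2r$ from above by a constant depending on $m,k$ on $(0,\frac\pi4)$, and checks that $4$ minus that constant is $\le \mu_1(B)$. Once $\Phi\le 0$ is established we conclude $N'\le -H\,N$; combined with $N(0)=0$ and the sign of $H$ near $0$ this yields $N\le 0$ and hence the claim. The delicate points are the behavior of the coefficients as $r\to 0^+$ (where $H$, $H'$ and $w'/w$ all blow up) — these singularities must be shown to cancel or to have the favorable sign — and making the constant bookkeeping in $\Phi\le 0$ tight enough to be covered by $\mu_1(B)\ge 2(m+k)$.
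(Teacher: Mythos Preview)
Your proposal is correct and rests on the same two ingredients as the paper---the ODE \eqref{3.1} and the lower bound $\mu_1(B)\ge 2(m+k)$ from Lemma~\ref{lm4.1}---but packages the final step differently. The paper works with $s(r)=g'(r)-2\cot 2r\,g(r)$ (which equals your $N(r)/w(r)$), assumes $s$ attains a positive interior maximum, and derives a contradiction at that point by plugging $s'(r_0)=0$ into \eqref{3.1}; the contradiction is exactly $-\mu_1(B)+2(m+k)>0$. You instead derive the differential inequality $N'=-HN+\Phi\,gw$ and integrate with the factor $J(r)$ (so that $(JN)'\le 0$ and $JN\to 0$ as $r\to 0^+$). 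The two arguments are standard substitutes for one another; the algebraic heart is identical. One point worth tightening in your write-up: your quantity $\Phi(r)=4-\mu_1(B)-H'(r)-H(r)\,\dfrac{w'(r)}{w(r)}$ is not merely bounded by a constant---it \emph{is} the constant $2(m+k)-\mu_1(B)$, since $\csc^2 r-\cot^2 r=1$ and $\sec^2 r-\tan^2 r=1$ make all the singular terms cancel exactly. So your anticipated ``delicate points'' near $r=0$ disappear, and $\Phi\le 0$ is immediate from Lemma~\ref{lm4.1}. The paper's contradiction argument arrives at precisely the same identity (see its inequality $-\mu_1(B)+2(m+k)>0$).
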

    \begin{proof}
   If $r>R$, then $G(r)=G(R)$, hence $\vp(r)$ is strictly decreasing in $(R, \frac \pi 4)$. Now we focus on the case of $r\in [0,R]$. Direct calculation gives
   \begin{align*}
       \vp'(r)=\frac{2}{\sin r\cos r}\Big(g'(r)-2\cot 2r g(r)\Big),
   \end{align*}
and it suffices to show
        \begin{align}\label{4.2}
           s(r):=g'(r) -2\cot 2r g(r)\leq 0
        \end{align}
        for $r\in (0, R)$.
        Note $s(0^+)=0$ and $s(R)<0$, and then
        we assume by the contradiction that $s(r)$ has a maximum point  $r_0 \in (0, R)$ and $s(r_0)>0$. So $s'(r_0)=0$, namely
        \begin{align*}
            g''(r_0)-(\cot r_0-\tan r_0) g'(r_0) +\frac{1}{\sin^2 r_0 \cos ^2 r_0} g(r_0)=0.
        \end{align*}
   Applying the equation \eqref{3.1}, the above equality is equivalent to 
        \begin{align}\label{4.3}
            (m\cot r_0-k\tan r_0 )g'(r_0) +\Big(\mu_1(B)-\frac{m-k}{\sin^2 r_0 }-\frac{k}{\sin^2 r_0 \cos ^2 r_0 }\Big )g(r_0)=0.
        \end{align}
      The assumption $s(r_0)>0$ gives
        \begin{align*}
            g'(r_0)>2\cot 2 r_0 g(r_0),
        \end{align*}
and combining with equality \eqref{4.3}, we obtain that
        \begin{align}\label{4.4}
            \Big(-\mu_1(B) +  \frac{m-k}{\sin^2 r_0 }+\frac{k}{\sin^2 r_0 \cos ^2 r_0 }\Big) -(\cot r_0- \tan r_0)(m\cot r_0-k \tan r_0)>0.
        \end{align}
        A brief calculation shows that  \eqref{4.4} is equivalent to 
        \begin{align*}
          -\mu_1(B) + 2(m+k)>0,
        \end{align*}
        contradicting with Lemma \ref{4.1}. Hence \eqref{4.2} comes true, proving the lemma.
    \end{proof}
Now we turn to prove Theorem \ref{thm1}.
\begin{proof}[Proof of Theorem \ref{thm1}]
    Let $v_i(x)$ ($1\le i \le m$) be trial functions defined by \eqref{3.5}, and  
    \begin{equation} \label{4.5}
        |\nabla^M v_i(x)|^2 = |G'(r)|^2 \omega_i(\xi)^2 + G(r)^2 |\nabla^{S_r} \omega_i(\xi)|^2,
    \end{equation}
    where $\nabla^M$ denotes the gradient operator of $M$, and $\nabla^{S_r}$ denotes the gradient operator of $S_r$ with the induced metric. 
    Plugging \eqref{4.5} into \eqref{3.7} and summing over $i$, we have
    \begin{align}\label{4.6}
\int_{\Omega} |G(r)|^2\, dx \leq & \sum_{i=1}^m \frac{1}{\mu_i(\Omega)} \int_{\Omega} |G'(r)|^2 |\omega_i(\xi)|^2 \, dx
       +\sum_{i=1}^m \frac{1}{\mu_i(\Omega)} \int_{\Omega} |G(r)|^2 |\nabla^{S_r} \omega_i(\xi)|^2 \, dx.
    \end{align}
  Recall that $G(r)$ is a constant for $r>R$, we estimate that
    \begin{align} \label{4.7}
    \begin{split}
        \int_{\Omega} |G'(r)|^2 |\omega_i(\xi)|^2 \, dx &= \int_{\Omega \cap B} |G'(r)|^2 |\omega_i(\xi)|^2 \, dx \\
        & \leq \int_{B} |G'(r)|^2 |\omega_i(\xi)|^2 \, dx \\
        & = \frac{1}{m} \int_{B} |G'(r)|^2 \, dx,
    \end{split}
    \end{align}
       where $B$ is  the geodesic ball of radius $R$ centered at $o$ in $M$ with $|B|=|\Omega|$. Moreover by the assumption that $\Omega$ is contained in a geodesic ball of radius $\frac{i(M)}{4}$, then we have $R\le \frac{\pi}{4}$.
 Using equality \eqref{2.1} and inequality \eqref{2.2} we estimate that
      \begin{align} \label{4.8}
      \begin{split}
        &\sum_{i=1}^m \frac{1}{\mu_i(\Omega) } |\nabla^{S_r} \omega_i(\xi)|^2 \\
        \le & \sum_{i=1}^{l} \frac{1}{\mu_i(\Omega) } |\nabla^{S_r} \omega_i(\xi)|^2+\frac{1}{\mu_{l+1}(\Omega) } \sum_{i=l+1}^{m} |\nabla^{S_r} \omega_i(\xi)|^2\\
         \leq  & \sum_{i=1}^{l} \frac{1}{\mu_i(\Omega) } |\nabla^{S_r} \omega_i(\xi)|^2+\frac{1}{\mu_{l+1}(\Omega) }\big(-H'(r)-\sum_{i=1}^{l}|\nabla^{S_r} \omega_i(\xi)|^2\big)  \\
         = & \sum_{i=1}^{l} \frac{1}{\mu_i(\Omega) } |\nabla^{S_r} \omega_i(\xi)|^2+\frac{1}{\mu_{l+1}(\Omega) }\sum_{i=1}^{l}\big(-\frac{H'(r)}{l}-|\nabla^{S_r} \omega_i(\xi)|^2\big)  \\
         \leq & \sum_{i=1}^{l} \frac{1}{\mu_i(\Omega) } |\nabla^{S_r} \omega_i(\xi)|^2+\sum_{i=1}^{l}\frac{1}{\mu_{i}(\Omega) }\big(-\frac{H'(r)}{l}-|\nabla^{S_r} \omega_i(\xi)|^2\big)  \\
        = & \frac{1}{l} \sum_{i=1}^{l} \frac{-H'(r)}{\mu_i(\Omega) },
        \end{split}
    \end{align}
    where $l$ is defined by \eqref{l}.
   Substituting inequalities \eqref{4.7} and \eqref{4.8} to  inequality \eqref{4.6} yields
    \begin{align}\label{4.9}
\int_{\Omega} G(r)^2 \,dx \leq  \sum_{i=1}^m \frac{1}{m \mu_i(\Omega) } \int_{B} G'(r)^2 \, dx  + \frac{1}{l} \sum_{i=1}^{l} \frac{1}{\mu_i(\Omega) }\int_{\Omega} G(r)^2 (-H'(r))\, dx.
    \end{align}
Recall from Lemma \ref{lm4.2} that $\frac{G(r)}{\sin r \cos r}$ is monotone nonincreasing in $(0, \frac \pi 4)$, then 
    \begin{align*}
        -G(r)^2H'(r)=(m-k)\frac{G(r)^2}{\sin^2 r} + (k-1)\frac{G(r)^2}{\sin^2 r \cos^2 r} 
    \end{align*}
    is nonincreasing in $(0, \frac \pi 4)$ either, hence we have
    \begin{align}\label{4.10}
        \int_{\Omega} |G(r)|^2 (-H'(r) )\, dx\le   \int_{B} |G(r)|^2 (-H'(r) )\, dx.
    \end{align}
    By the definition of $G(r)$, we see easily that $G(r)$ is monotone increasing in  $(0, \frac \pi 4)$, so
    \begin{align}\label{4.11}
         \int_{B} |G(r)|^2 \, dx
        \leq & \int_{\Omega} |G(r)|^2\, dx.
    \end{align}
   Putting inequalities \eqref{4.9}, \eqref{4.10} and \eqref{4.11} together, we conclude that
    \begin{align*} 
    \begin{split}
        \int_{B} G(r)^2 \, dx
        \leq \frac{1}{l} \sum_{i=1}^{l} \frac{1}{\mu_i(\Omega) } \Big(\int_{B} |G'(r)|^2 -|G(r)|^2 H'(r) \, dx\Big),
        \end{split}
    \end{align*}
yielding 
    \begin{align*}
        \sum_{i=1}^{l} \frac{1}{\mu_i(\Omega) } 
        \geq  \frac{l\int_{B} |G(r)|^2 \, dx }{\int_{B} G'(r)^2 -G(r)^2 H'(r) \, dx} 
        = \frac{l}{\mu_1(B)},
    \end{align*}
    where in the equality we used the fact that $g(r)$ ($r\in [0, R]$) is the first eigenfunction with respect to $\mu_1(B)$, as characterized by \eqref{3-2}.  Hence we complete the proof of  estimate \eqref{1.5}.

   Moreover if the equality of \eqref{1.5} occurs, all inequalities above hold as equalities. Particularly \eqref{4.11} holds as an equality, then $\Omega=B$.
\end{proof}

\section{proof of theorem \ref{thm2}}
In this section, we assume $M$ is a noncompact rank-1 symmetric space. Let $\Omega$ be a bounded domain with smooth boundary in $M$, and $B$ be a geodesic ball in $M$ with volume $|\Omega|$. Denote by $R$ be the radius of $B$. The proof of Theorem \ref{thm2} is quite similar as that of Theorem \ref{thm1}. To begin with, we first give a monotonicity lemma.
\begin{lemma}\label{lm5.1}
 Let $G(r)$ be the function defined in \eqref{3.3}. Then the function
 $$
 \varphi(r):=\frac{G(r)}{\sinh r}
 $$
is nonincreasing in $(0, \infty)$.
    \end{lemma}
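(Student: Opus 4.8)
The plan is to mimic the proof of Lemma \ref{lm4.2}, replacing the compact density $\sin^{m-1}r\cos^{k-1}r$ by its noncompact analog. Since $G(r)=G(R)$ is constant for $r>R$ while $\sinh r$ is strictly increasing, $\varphi$ is strictly decreasing on $(R,\infty)$, so it suffices to treat $r\in(0,R)$, where $G(r)=g(r)$ solves the ODE \eqref{3.1} with $H(r)=(m-1)\coth r+(k-1)\tanh r$. A direct computation gives
\begin{align*}
\varphi'(r)=\frac{g'(r)\sinh r-g(r)\cosh r}{\sinh^2 r}=\frac{g'(r)-g(r)\coth r}{\sinh r},
\end{align*}
so it is enough to show $s(r):=g'(r)-g(r)\coth r\le 0$ on $(0,R)$. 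First I would check the boundary behaviour: since $g(0)=0$, $g'(0)=1$ and $\coth r\sim 1/r$, one has $s(0^+)=0$; and since $g'(R)=0$ while $g(R)>0$, we get $s(R)=-g(R)\coth R<0$.

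Next, arguing by contradiction, suppose $s$ attains a positive maximum at some interior point $r_0\in(0,R)$, so $s(r_0)>0$ and $s'(r_0)=0$. Computing $s'(r)=g''(r)-g'(r)\coth r+g(r)/\sinh^2 r$ and using \eqref{3.1} to eliminate $g''(r_0)$, the condition $s'(r_0)=0$ becomes a linear relation between $g(r_0)$ and $g'(r_0)$; then substituting the inequality $g'(r_0)>g(r_0)\coth r_0$ coming from $s(r_0)>0$ should reduce, after simplification using the identities $\coth^2-1=1/\sinh^2$ and $1-\tanh^2=1/\cosh^2$, to an inequality of the form $-\mu_1(B)-(\text{something nonnegative})>0$, i.e. $\mu_1(B)<0$. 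This is impossible since $\mu_1(B)>0$, giving the contradiction. (Note that in the noncompact case one does not even need a lower bound on $\mu_1(B)$ analogous to Lemma \ref{lm4.1}; positivity of $\mu_1(B)$ suffices, because all the hyperbolic correction terms $\coth r\,H(r)$ etc. have a favorable sign, in contrast to the compact case where the $-\tan$ terms forced the use of the sharp bound $\mu_1(B)\ge 2(m+k)$.)

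The main obstacle I anticipate is the bookkeeping in the substitution step: one must carefully expand $(\coth r_0)\cdot H(r_0)=(\coth r_0)\big((m-1)\coth r_0+(k-1)\tanh r_0\big)$ together with the $\mu_1(B)+H'(r_0)$ term, where $H'(r_0)=-(m-1)/\sinh^2 r_0+(k-1)/\cosh^2 r_0$, and verify that all the non-$\mu_1(B)$ terms collect with the correct sign. I expect the $g(r_0)$-coefficient, after using $s'(r_0)=0$ to express things in one variable and dividing by $g(r_0)>0$, to collapse to exactly $-\mu_1(B)$ minus a manifestly nonnegative combination of $\coth^2$, $\tanh^2$ and cross terms, so that $s(r_0)>0$ is contradicted outright. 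Once $s\le 0$ on $(0,R)$ is established, combined with the strict monotonicity on $(R,\infty)$ and continuity at $r=R$, we conclude $\varphi$ is nonincreasing on all of $(0,\infty)$, which is the claim.
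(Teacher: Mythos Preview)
Your proposal is correct and follows essentially the same route as the paper: reduce to $s(r)=g'(r)-g(r)\coth r\le 0$ on $(0,R)$, argue by contradiction at an interior maximum, eliminate $g''$ via \eqref{3.1}, and combine the resulting linear relation with $g'(r_0)>g(r_0)\coth r_0$. The paper carries out the bookkeeping explicitly and obtains (after simplification) $\mu_1(B)<-m-(k-1)\big(1+\tfrac{1}{\cosh^2 r_0}\big)$, which is exactly the ``$-\mu_1(B)-(\text{nonnegative})>0$'' form you anticipated; your remark that only positivity of $\mu_1(B)$ is needed here, in contrast to the compact case, is also the paper's observation.
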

\begin{proof}
Note that $G(r)=G(R)$ for $r>R$, then $\vp(r)$ is monotone decreasing in $(R,\infty)$. Now we focus on the case  of $(0,R)$. Taking derivative of $\vp(r)$ yields
\begin{align*}
    \vp'(r)=\frac{1}{\sinh^2 r}\big(G'(r)\sinh r-G(r)\cosh r\big),
\end{align*}
and it suffices to show that for $r\in(0, R)$ 
        \begin{align}\label{5.1}
            s(r):= G'(r) - \coth r G(r) \leq 0.
        \end{align}
Noting that $s(0^+)=0$ and $s(R)<0$, we then assume by  contradiction that   $s(r)$ has a maximum point  $r_0 \in (0, R)$ and $s(r_0)>0$. Then  $s'(r_0)=0$, i.e.
        	\begin{align}\label{5.2} 
		g''(r_0)-\coth r_0 g'(r_0)+\frac{g(r_0)}{\sinh^2 r_0}=0.
	\end{align}
 Combining equality \eqref{5.2} and equation \eqref{3.1}, we have
 	\begin{align}\label{5.3}
 		(m \coth r_0 +(k-1) \tanh r_0 )g'(r_0)
   +(\mu_1(B)-\frac{m-k+1 }{\sinh^2 r_0 }-\frac{k-1 }{\sinh^2 r_0 \cosh^2 r_0 } )g(r_0)=0.
 	\end{align}
 Since $s(r_0)>0$, then
 	\begin{align*}
 		g'(r_0)-\coth r_0 g(r_0)>0.
 	\end{align*}
Thus it follows from above inequality and equality \eqref{5.3} that
 	\begin{align*}
 		-\mu_1(B) + \frac{m-k+1}{\sinh^2 r_0}+ \frac{k-1}{\sinh^2 r_0 \cosh^2 r_0}> m \coth^2 r_0 +k-1,
 	\end{align*}
 	that is
 	\begin{align}\label{5.4}
 	\frac{m}{\sinh^2 r_0}(1-\cosh ^2 r_0) +(k-1)\big(\frac{1-\cosh^2 r_0}{\sinh^2 r_0 \cosh^2 r_0} -1\big)>\mu_1(B).
 	\end{align}
  Clearly the left hand side of \eqref{5.4} is negative, 
 	and thus \eqref{5.4} is contradicting with the fact $\mu_1(B)>0$. Hence inequality \eqref{5.1} holds.	   
\end{proof}
Now we turn to prove Theorem \ref{thm2}.
\begin{proof}[Proof of Theorem \ref{thm2}]
     Let $v_i(x)$ ($1\le i \le m$) be trial functions defined by \eqref{3.5}. Then similarly as in the proof of Theorem \ref{thm1}, we have
         \begin{align}\label{5.5}
 \int_{\Omega} G(r)^2 \, dx \leq & \sum_{i=1}^m \frac{1}{\mu_i(\Omega)} \int_{\Omega} G'(r)^2 \omega_i(\xi)^2\, dx
         +\sum_{i=1}^m \frac{1}{\mu_i(\Omega)} \int_{\Omega} |G(r)|^2 |\nabla^{S_r} \omega_i(\xi)|^2\, dx,
    \end{align}
and 
    \begin{equation} \label{5.6}
        \int_{\Omega} |G'(r)|^2 |\omega_i (\xi) |^2 \, dx \leq \frac{1}{m} \int_{B} G'(r)^2 \, dx,
    \end{equation}
      where $B$ is  the geodesic ball of radius $R$ centered at $o$ in $M$ with $|B|=|\Omega|$.
Using \eqref{2.1} and \eqref{2.3} we estimate that
    \begin{align} \label{5.7}
    \begin{split}
        &\sum_{i=1}^m \frac{1}{\mu_i(\Omega) } |\nabla^{S_r} \omega_i(\xi)|^2  \\
        \le & \sum_{i=1}^{p} \frac{1}{\mu_i(\Omega) } |\nabla^{S_r} \omega_i(\xi)|^2+\frac{1}{\mu_{p+1}(\Omega) } \sum_{i=p+1}^{m} |\nabla^{S_r} \omega_i(\xi)|^2,  \\
         \leq & \sum_{i=1}^{p} \frac{1}{\mu_i(\Omega) } |\nabla^{S_r} \omega_i(\xi)|^2+\frac{1}{\mu_{p+1}(\Omega) }\big(-H'(r)-\sum_{i=1}^{p}|\nabla^{S_r} \omega_i(\xi)|^2\big)  \\
         = & \sum_{i=1}^{p} \frac{1}{\mu_i(\Omega) } |\nabla^{S_r} \omega_i(\xi)|^2+\frac{1}{\mu_{p+1}(\Omega) }\sum_{i=1}^{p}\big(-\frac{H'(r)}{p}-|\nabla^{S_r} \omega_i(\xi)|^2\big) \\
        \le & \sum_{i=1}^{p} \frac{1}{\mu_i(\Omega) } |\nabla^{S_r} \omega_i(\xi)|^2+\sum_{i=1}^{p}\frac{1}{\mu_{i}(\Omega) }\big(-\frac{H'(r)}{p}-|\nabla^{S_r} \omega_i(\xi)|^2\big) \\
       =& \frac{1}{p} \sum_{i=1}^{p} \frac{-H'(r)}{\mu_i(\Omega) },
        \end{split}
    \end{align}
   where $p$ is defined by \eqref{p}.
   Assembling inequalities \eqref{5.5}, \eqref{5.6} and \eqref{5.7}, we get
    \begin{align}\label{5.8}
 \int_{\Omega} G(r)^2 \, dx \leq&\sum_{i=1}^m \frac{1}{m \mu_i(\Omega) } \int_{B} G'(r)^2 \, dx+ \frac{1}{p} \sum_{i=1}^{p} \frac{1}{\mu_i(\Omega) }\int_{\Omega} G(r)^2 (-H'(r))\, dx.
 \end{align}
 Recall from Lemma \ref{lm5.1} that $G(r)/\sinh r$ is nonincreasing in $(0, \infty)$, so is
     $$
     G(r)^2(-H'(r))=(m-k)\frac{G(r)^2}{\sinh^2 r} + (k-1)\frac{G(r)^2}{\sinh^2 r \cosh^2 r}.
     $$
Hence
 	\begin{align}\label{5.9}
 		\int_{\Omega} -G(r)^2 H'(r)\, dx \leq \int_{B} -G'(r)^2 H'(r)\, dx.
 	\end{align}
  Since $G(r)$ is monotone increasing in $(0, R)$, then
  \begin{align}\label{5.10}
  	\int_{B} G(r)^2 \, dx &\leq \int_{\Omega} G(r)^2 \, dx.
  \end{align}
 So it follows from \eqref{5.8}, \eqref{5.9} and \eqref{5.10} that 
 	\begin{align*}
 			\int_{B} G(r)^2\, dx \leq \frac{1}{p} \sum_{i=1}^{p}\frac{1}{\mu_i(\Omega) } \int_{B} G'(r)^2 -G(r)^2 H'(r)\, dx,
 	\end{align*}
 implies
 	\begin{align*}
 		\sum_{i=1}^{p} \frac{1}{\mu_i(\Omega)} \geq \frac{	p \int_{B} G(r)^2\, dx }{\int_{B} G'(r)^2 -G(r)^2 H'(r)\, dx } = \frac{p}{\mu_1(B)},
 	\end{align*}
  proving \eqref{1.6}. If the equality case of \eqref{1.6} occurs, then inequality \eqref{5.10} holds as an equality, hence $\Omega=B$.
\end{proof}

\section*{Data Availability}
Data sharing is not applicable to this article as no new data were created or analyzed in this study.
	\bibliographystyle{plain}
	\bibliography{ref}

\begin{thebibliography}{10}

\bibitem{AS96}
A.~R. Aithal and G.~Santhanam.
\newblock Sharp upper bound for the first non-zero {N}eumann eigenvalue for
  bounded domains in rank-{$1$} symmetric spaces.
\newblock {\em Trans. Amer. Math. Soc.}, 348(10):3955--3965, 1996.

\bibitem{As99}
Mark~S. Ashbaugh.
\newblock Open problems on eigenvalues of the {L}aplacian.
\newblock In {\em Analytic and geometric inequalities and applications}, volume
  478 of {\em Math. Appl.}, pages 13--28. Kluwer Acad. Publ., Dordrecht, 1999.

\bibitem{AB93}
Mark~S. Ashbaugh and Rafael~D. Benguria.
\newblock Universal bounds for the low eigenvalues of {N}eumann {L}aplacians in
  {$n$} dimensions.
\newblock {\em SIAM J. Math. Anal.}, 24(3):557--570, 1993.

\bibitem{AB95}
Mark~S. Ashbaugh and Rafael~D. Benguria.
\newblock Sharp upper bound to the first nonzero {N}eumann eigenvalue for
  bounded domains in spaces of constant curvature.
\newblock {\em J. London Math. Soc. (2)}, 52(2):402--416, 1995.

\bibitem{BBC20}
Rafael~D. Benguria, Barbara Brandolini, and Francesco Chiacchio.
\newblock A sharp estimate for {N}eumann eigenvalues of the
  {L}aplace-{B}eltrami operator for domains in a hemisphere.
\newblock {\em Commun. Contemp. Math.}, 22(3):1950018, 9, 2020.

\bibitem{CR19}
Philippe Castillon and Berardo Ruffini.
\newblock A spectral characterization of geodesic balls in non-compact rank one
  symmetric spaces.
\newblock {\em Ann. Sc. Norm. Super. Pisa Cl. Sci. (5)}, 19(4):1359--1388,
  2019.

\bibitem{He06}
Antoine Henrot.
\newblock {\em Extremum problems for eigenvalues of elliptic operators}.
\newblock Frontiers in Mathematics. Birkh\"{a}user Verlag, Basel, 2006.

\bibitem{LL23}
Jeffrey~J. Langford and Richard~S. Laugesen.
\newblock Maximizers beyond the hemisphere for the second {N}eumann eigenvalue.
\newblock {\em Math. Ann.}, 386(3-4):2255--2281, 2023.

\bibitem{LWW22}
Xiaolong Li, Kui Wang, and Haotian Wu.
\newblock The second {R}obin eigenvalue in non-compact rank-1 symmetric spaces.
\newblock {\em Preprint}, 2022.
\newblock {\tt arXiv:2208.07546 [math.DG]}.

\bibitem{Sz54}
G.~Szeg\"{o}.
\newblock Inequalities for certain eigenvalues of a membrane of given area.
\newblock {\em J. Rational Mech. Anal.}, 3:343--356, 1954.

\bibitem{Wang19}
Kui Wang.
\newblock An upper bound for the second {N}eumann eigenvalue on {R}iemannian
  manifolds.
\newblock {\em Geom. Dedicata}, 201:317--323, 2019.

\bibitem{WX21}
Qiaoling Wang and Changyu Xia.
\newblock On the {A}shbaugh-{B}enguria conjecture about lower-order {D}irichlet
  eigenvalues of the {L}aplacian.
\newblock {\em Anal. PDE}, 14(7):2069--2078, 2021.

\bibitem{Wei56}
H.~F. Weinberger.
\newblock An isoperimetric inequality for the {$N$}-dimensional free membrane
  problem.
\newblock {\em J. Rational Mech. Anal.}, 5:633--636, 1956.

\bibitem{WX23}
Changyu Xia and Qiaoling Wang.
\newblock On a conjecture of {A}shbaugh and {B}enguria about lower eigenvalues
  of the {N}eumann laplacian.
\newblock {\em Math. Ann.}, 385(1-2):863--879, 2023.

\bibitem{Xu95}
Youyu Xu.
\newblock The first nonzero eigenvalue of {N}eumann problem on {R}iemannian
  manifolds.
\newblock {\em J. Geom. Anal.}, 5(1):151--165, 1995.

\end{thebibliography}

\end{document}